\title{Size of Union}
\author{Yuanzhong Ou, Boli Wang, Min Yan \\ Hong Kong University of Science and Technology}
\newcommand{\sub}{\subset}
\newtheorem{theorem}{Theorem}
\newtheorem{proposition}[theorem]{Proposition}
\newtheorem*{theorem*}{Theorem}
\newtheorem*{proposition*}{Proposition}
\newtheorem*{addendum*}{Addendum}
\theoremstyle{definition}
\newtheorem*{remark*}{Remark}
\begin{document}

\maketitle

\section{Main Result}

Given finite sets $A_1,A_2,\dotsc,A_n$ with respective numbers $a_1,a_2,\dotsc,a_n$ of elements, the union $A_1\cup A_2\cup\dotsb\cup A_n$ can have as many as $a_1+a_2+\dotsb+a_n$ elements and as few as $\max\{a_1,a_2,\dotsc,a_n\}$ elements. The maximum is realised when the sets are pairwise disjoint. When the minimum is realised, chances are there are many nonempty intersections among the sets. 

In this paper, we fix $k\le n$ and study the bound on the size of the union under the additional assumption that the intersection of any $k$ sets is empty. For $k=2$, this is the trivial pairwise disjoint case. 

In a simpler version of the problem, the sets are Lebesgue measurable subsets of some Euclidean space, and the size is the Lebesgue measure. The problem is simpler because any non-negative number is allowed to be the size, not just non-negative {\em integers}. 

\begin{theorem*}
Let non-negative numbers $a_1,a_2,\dotsc,a_n$ be given. Let $2\le k\le n$ and 
\[
\bar{a}=\frac{1}{k-1}(a_1+a_2+\dotsb+a_n).
\]
Then there are Lebesgue measurable subsets $A_1,A_2,\dotsc,A_n$, such that $\mu(A_i)=a_i$, $\mu(\cup A_i)=a$, and the intersection of any $k$ subsets among $A_i$ is empty, if and only if 
\[
\max\{a_1,a_2,\dotsc,a_n,\bar{a}\}\le a\le a_1+a_2+\dotsb+a_n.
\]
Moreover, if $a_1,a_2,\dotsc,a_n$ and $a$ are integers, then the same holds for the case $A_i$ are finite sets and $\mu$ counts the number of elements.
\end{theorem*}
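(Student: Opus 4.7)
My plan is to prove necessity of each bound and then to give an explicit construction for sufficiency, obtaining both the measurable and the integer case from a single recipe: wrapping the sets around a circle of length $a$.

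For necessity, the bounds $a\ge \max_i a_i$ and $a\le \sum_i a_i$ are immediate from $A_i\subseteq \bigcup_j A_j$ and subadditivity of $\mu$. The substantive bound is $a\ge \bar a$: the hypothesis that every $k$-fold intersection is empty is equivalent to the pointwise inequality $\sum_i \mathbf{1}_{A_i}(x)\le k-1$, and integrating both sides yields $\sum_i a_i \le (k-1)\mu(\bigcup_j A_j)=(k-1)a$.

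For the sufficiency of the measurable case, I would fix any ordering of the indices, set $s_0=0$ and $s_i=a_1+\dots+a_i$, and define $A_i$ as the image of the half-open interval $[s_{i-1},s_i)$ under the projection $\mathbb{R}\to \mathbb{R}/a\mathbb{Z}$. Since $a_i\le a$, this projection is injective on $[s_{i-1},s_i)$, so $\mu(A_i)=a_i$. The key calculation is that the coverage function $c(x)=\sum_i \mathbf{1}_{A_i}(x)$ takes only the two values $\lfloor S/a\rfloor$ and $\lceil S/a\rceil$, where $S=\sum_i a_i$: at each interior cutpoint $s_i\bmod a$ for $1\le i\le n-1$, the arc $A_i$ ends precisely where $A_{i+1}$ begins, so $c$ has zero net jump; the only net jumps on the circle are $+1$ at $s_0=0$ and $-1$ at $s_n\bmod a$. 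Since $a\le S\le (k-1)a$, both values of $c$ lie in $\{1,\dots,k-1\}$, which simultaneously gives $\mu(\bigcup_j A_j)=a$ and the absence of $k$-fold intersections.

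The integer case follows the same recipe inside $\mathbb{Z}/a\mathbb{Z}$, taking $A_i$ to be the $a_i$ consecutive residues starting at $s_{i-1}\bmod a$. One may either repeat the jump argument discretely, or embed each residue $p$ as the unit interval $[p,p+1)\subset \mathbb{R}/a\mathbb{Z}$ and deduce the coverage bound directly from the continuous version. The only step requiring genuine care is the two-value claim for $c$ near the wraparound point $s_n\bmod a$; the boundary cases ($a=0$ forcing all $a_i=0$, or individual $a_i=0$) are handled by letting the corresponding sets be empty and do not affect the bookkeeping.
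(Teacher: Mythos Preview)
Your argument is correct, and it is genuinely different from the paper's.  For necessity, the paper decomposes $\cup A_i$ into ``pure intersections'' $B_{i_1\cdots i_l}$ and compares the weighted sum $\sum_l l\sum\mu(B_{i_1\cdots i_l})$ against $(k-1)\sum\mu(B_{i_1\cdots i_l})$; your pointwise bound $\sum_i\mathbf 1_{A_i}\le k-1$ followed by integration is the same inequality in disguise, but shorter.  For sufficiency, the contrast is sharper: the paper first realises only the \emph{lower} endpoint $\max\{a_n,\bar a\}$ via a multi-case induction on $(n,k)$ (splitting on whether $\bar a\le a_n$ or $\bar a\ge a_n$, and in the latter case on whether $\bar a-a_n\le a_1$), with the extreme case $k=n$ handled by an explicit linear system; it then reaches intermediate $a$ by interpolating (convex combinations in the measure case, a one-step $a\mapsto a+1$ ``leak'' in the counting case).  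Your circle construction produces every admissible $a$ in one stroke, and the same recipe in $\mathbb Z/a\mathbb Z$ handles the integer case with no separate reduction.

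What each approach buys: the paper's decomposition into pure intersections, while heavier, yields structural information beyond the theorem---in particular the Addendum that when $\frac{\sigma}{k-1}\le a\le\frac{\sigma}{k-2}$ one can (in the measure case) arrange all pure intersections of order $\ne k-1,k-2$ to vanish.  Your construction is far more economical and makes the two-value coverage phenomenon transparent, but it does not by itself control which orders of pure intersection appear (the wrapped arcs will generically produce nonempty $B_{i_1\cdots i_l}$ for every $l\le\lceil S/a\rceil$).  One small point worth making explicit in your write-up: after the jump argument shows $c$ is two-valued, the identification of those values as $\lfloor S/a\rfloor$ and $\lceil S/a\rceil$ follows from $\int_0^a c=\sum_i a_i=S$.
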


The bounds for $a$ in the theorem are well known for the measure case. By taking convex combinations of sizes of pure intersections (see Section \ref{lower}), it is not hard to see that, if $a$ and $a'$ are realised as the sizes of unions, then any number between $a$ and $a'$ can also be realised as the size of a union. So the new claim here is the realisability of the two bounds (especially the lower bound) and any number between the two bounds. Moreover, in an addendum in Section \ref{lower}, we will further specify how the realisation can be constructed in the ``most efficient'' way. 

We believe the theorem was not known for the case of counting the number of elements. The case is more subtle because we need to make sure that all the sizes in the realisation are non-negative integers. 

The measure part of the theorem remains true for any measure space $(X,\mu)$ with the property that $\mu(X)=\infty$, and for any $A\sub X$ of finite measure and any $b>0$, there is a measurable $B\sub X$, such that $A\cap B=\emptyset$ and $\mu(B)=b$. A suitable probabilistic version of the theorem is also not hard to state and prove.

Our theorem is a very simple case of Boolean probability bounding problem \cite[Chapter 19]{boole} that asks the question that, if one knows the probability of some logical combinations of events, how much one can say about the probability of another logical combination. In the theorem, we know the probability of the single events and that $k$ events cannot happen at the same time (i.e., the probability of such combinations are zero), and the answer is the exact range about the probability that at least one event happens. Lots of research have been done on the problem. See \cite{hoppe,pg,ve} for some of the latest developments. However, these works are usually based on the linear programming method \cite{hal}, and the bounds are often optimal for some choices of $a_i$ but never for all choices. As far as we know, the only construction that realises all the individual $a_i$ as the measure of $A_i$ is by Fr\'echet \cite{frechet}. Fr\'echet's work is our theorem without the assumption on the emptyness of the intersection.

\section{The Lower Bound}
\label{lower}

The bounds in the theorem are the well known Bonferroni type inequalities \cite{bon}. The only less trivial one is $\mu(\cup A_i)\ge \bar{a}$. We will give the proof here, mainly for the purpose of explaining the addendum to the main result.

For distinct $1\le i_1,i_2,\dotsc,i_l\le n$, we introduce ``pure intersections'' 
\begin{align*}
B_{i_1i_2\dotsb i_l}
&=A_{i_1}\cap A_{i_2}\cap\dotsb\cap A_{i_l}-\cup_{j\ne i_1,i_2,\dotsc,i_l}A_j \\
&=A_{i_1}\cap A_{i_2}\cap\dotsb\cap A_{i_l}-\cup_{j\ne i_1,i_2,\dotsc,i_l}A_{i_1}\cap A_{i_2}\cap\dotsb\cap A_{i_l}\cap A_j.
\end{align*}
The theorem assumes $B_{i_1i_2\dotsb i_l}=\emptyset$ for $l\ge k$.  Therefore we have disjoint union decompositions
\begin{align*}
A_j
&=B_j\sqcup(\sqcup_{i\ne j} B_{ij})\sqcup(\sqcup_{\substack{i_1,i_2\ne j \\ i_1<i_2}} B_{i_1i_2j})\sqcup\dotsb\sqcup(\sqcup_{\substack{i_1,\dotsc,i_{k-2}\ne j \\ i_1<\dotsb<i_{k-2}}} B_{i_1\dotsb i_{k-2}j}), \\
A_1\cup\dotsb\cup A_n
&=(\sqcup_iB_i)\sqcup(\sqcup_{i_1<i_2} B_{i_1i_2})\sqcup(\sqcup_{i_1<i_2<i_3} B_{i_1i_2i_3})\sqcup\dotsb\sqcup(\sqcup_{i_1<\dotsb<i_{k-1}} B_{i_1\dotsb i_{k-1}}).
\end{align*}
This implies
\begin{align*}
\mu(A_j)
&=\mu(B_j)+\sum_{i\ne j}\mu(B_{ij})+\sum_{\substack{i_1,i_2\ne j \\ i_1<i_2}}\mu(B_{i_1i_2j})+\dotsb+\sum_{\substack{i_1,\dotsc,i_{k-2}\ne j \\ i_1<\dotsb<i_{k-2}}}\mu(B_{i_1\dotsb i_{k-2}j}), \\
\mu(A_1\cup\dotsb\cup A_n)
&=\sum_i\mu(B_i)+\sum_{i_1<i_2}\mu(B_{i_1i_2})+\sum_{i_1<i_2<i_3}\mu(B_{i_1i_2i_3})+\dotsb+\sum_{i_1<\dotsb<i_{k-1}}\mu(B_{i_1\dotsb i_{k-1}}).
\end{align*}
Adding the first equality together for various $j$ and comparing with the second equality, we get
\begin{align*}
&\mu(A_1)+\mu(A_2)+\dotsb+\mu(A_n) \\
&=\sum_i\mu(B_i)+2\sum_{i_1<i_2}\mu(B_{i_1i_2})+3\sum_{i_1<i_2<i_3}\mu(B_{i_1i_2i_3})+\dotsb+(k-1)\sum_{i_1<\dotsb<i_{k-1}}\mu(B_{i_1\dotsb i_{k-1}}) \\
&\le (k-1)\left(\sum_i\mu(B_i)+\sum_{i_1<i_2}\mu(B_{i_1i_2})\sum_{i_1<i_2<i_3}\mu(B_{i_1i_2i_3})+\dotsb+\sum_{i_1<\dotsb<i_{k-1}}\mu(B_{i_1\dotsb i_{k-1}})\right) \\
&= (k-1)\mu(A_1\cup\dotsb\cup A_n).
\end{align*}

The proof tells us that the lower bound $\bar{a}$ is realised if and only if
\[
\mu(B_i)=\mu(B_{i_1i_2})=\mu(B_{i_1i_2i_3})=\dotsb=\mu(B_{i_1\dotsb i_{k-2}})=0.
\]
This means that the pure intersections of $j$ subsets are almost empty for any $j\ne k-1$. In other words, the elements of $A_i$ are ``concentrated'' in the pure intersections of $k-1$ subsets. 

Let $\sigma=a_1+a_2+\dotsb+a_n$. Consider the sequence
\[
\sigma>\frac{\sigma}{2}>\dotsb>\frac{\sigma}{n-1}>\frac{\sigma}{n}.
\]
We have
\[
\sigma>\frac{\sigma}{2}>\dotsb>\frac{\sigma}{m-1}\ge \max\{a_1,a_2,\dotsc,a_n\}>\frac{\sigma}{m}
\]
for some $m\le n$. For any $k\le m$, we expect the critical case $\mu(\cup A_i)=\frac{\sigma}{k-1}$ to be realisable by pure intersections of $k-1$ subsets. Now if the size of the union lies between two critical cases, then we expect the realisation can also be constructed ``in between''.

\begin{addendum*}
If
\begin{equation}\label{add}
\frac{1}{k-2}(a_1+a_2+\dotsb+a_n)\ge a\ge \frac{1}{k-1}(a_1+a_2+\dotsb+a_n)\ge \max\{a_1,a_2,\dotsc,a_n\},
\end{equation}
then it is possible to find $A_i$, such that $\mu(A_i)=a_i$, $\mu(\cup A_i)=a$, and the pure intersections of $j$ subsets are empty for $j\ne k-1,k-2$.
\end{addendum*}

The addendum holds only for the measure. At the end of the paper, we will construct an example that shows that the addendum does not hold for counting.

\section{Realisation for Measure} 
\label{smeasure}

In this section, we prove that the lower bound in the main theorem can be realised. Without loss of generality, we will always assume 
\begin{equation}\label{assume}
a_1\le a_2\le\dotsb\le a_n.
\end{equation}

We first consider the case $\bar{a}\le \max\{a_1,a_2,\dotsc,a_n\}=a_n$. This means that
\[
a_n\ge \bar{a}'=\frac{1}{k-2}(a_1+a_2+\dotsb+a_{n-1}).
\]
Note that $\max\{a_1,a_2,\dotsc,a_{n-1},\bar{a}'\}=\max\{a_{n-1},\bar{a}'\}$ is the lower bound for the case $k-1\le n-1$. We may try to apply the induction here. The initial case of the induction is $k=2<n$. In the initial case, we have $\bar{a}=a_1+a_2+\dotsb+a_n$, and $\mu(\cup_{i=1}^{n-1} A_i)=\bar{a}=\max\{a_1,a_2,\dotsc,a_n,\bar{a}\}$ always holds. So by induction, we can find $A_1,A_2,\dotsc,A_{n-1}$, such that 
\[
\mu(A_i)=a_i,\quad
\mu(\cup_{i=1}^{n-1} A_i)=\max\{a_{n-1},\bar{a}'\},
\]
and the intersection of any $k-1$ subsets is empty. Let $\langle x\rangle$ be a subset of measure $x$ and introduce (note that $a_n\ge \max\{a_{n-1},\bar{a}'\}$)
\[
A_n=(A_1\cup A_2\cup \dotsb\cup A_{n-1})\sqcup \langle a_n-\max\{a_{n-1},\bar{a}'\}\rangle.
\]
Then among $A_1,A_2,\dotsc,A_{n-1},A_n$, we have 
\[
\mu(A_n)=\mu(\cup_{i=1}^{n} A_i)=\mu(\cup_{i=1}^{n-1} A_i)+(a_n-\max\{a_{n-1},\bar{a}'\})=a_n,
\]
and the intersection of any $k$ subsets is empty.

Next we turn to the case $\bar{a}\ge a_n$. This means that $b=\bar{a}-a_n\ge 0$, and we have
\[
a_n=\frac{1}{k-1}((a_1-b)+\dotsb+(a_{k-1}-b)+a_k+\dotsb+a_n).
\]
If $b\le a_1$, then for the problem of realising the lower bound for $n$ subsets of measure $a'_1=a_1-b$, $\dotsc$, $a'_{k-1}=a_{k-1}-b$, $a'_k=a_k$, $\dotsc$, $a'_n=a_n$, such that the intersection of any $k$ subsets is empty, we have
\[
a_n=\max\{a'_1,a'_2,\dotsc,a'_n\}
=\frac{1}{k-1}(a'_1+a'_2+\dotsb+a'_n).
\]
This fits into the case $\bar{a}\le a_n$ we proved earlier. Therefore we can find $A'_1,A'_2,\dots,A'_n$, such that 
\[
\mu(A'_i)=a'_i,\quad
\mu(\cup A'_i)=a_n=\bar{a}-b,
\]
and the intersection of any $k$ subsets is empty. Take
\[
A_i=\begin{cases}
A'_i\sqcup\langle b \rangle, &\text{if }1\le i<k, \\
A'_i, &\text{if }k\le i\le n.
\end{cases}
\]
Then $\mu(A_i)=a_i$, $\mu(\cup A_i)=\mu(\cup A'_i)+b=\bar{a}$, and the intersection of any $k$ subsets among $A_i$ is still empty.

If $b\ge a_1$, then subtracting $b$ from $a_i$ may yield negative number. So we subtract $a_1$ instead to get $a'_2=a_2-a_1,\dotsc,a'_{k-1}=a_{k-1}-a_1,a'_k=a_k,\dotsc,a'_n=a_n$. Consider the problem of realising the lower bound for $n-1$ subsets of measure $a'_2,a'_3,\dotsc,a'_n$, such that the intersection of any $k$ subsets is empty. We have
\[
a_n=\max\{a'_2,a'_3,\dotsc,a'_n\}
\le\frac{1}{k-1}(a'_2+a'_3+\dotsb+a'_n)=\bar{a}-a_1.
\]
Now we are in the situation of realising $n-1$ subsets such that the intersection of any $k$ subsets is empty. Again we may try to apply the induction. Since we keep the same $k$ and reduce $n$, the initial case is $k=n$. Moreover, we have the additional property that $\max\{a_1,a_2,\dotsc,a_n\}\le \bar{a}$. So the initial case is covered by the following result.

\begin{proposition}\label{extreme}
Suppose $a_i\ge 0$ satisfy
\begin{equation}\label{condn}
\max\{a_1,a_2,\dotsc,a_n\}\le \bar{a}=\frac{1}{n-1}(a_1+a_2+\dotsb+a_n).
\end{equation}
Then there are Lebesgue measurable subsets $A_i$, such that 
\[
\mu(A_i)=a_i,\quad
\mu(\cup_{i=1}^n A_i)=\bar{a},\quad
\cap_{i=1}^nA_i=\emptyset.
\]
\end{proposition}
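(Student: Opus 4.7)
The plan is to realise the bound in the extremal form indicated by the discussion preceding the proposition: all measure concentrated in pure intersections of exactly $n-1$ subsets. Concretely, I would look for pairwise disjoint measurable sets $B_1,\dotsc,B_n$ (these will play the role of the pure intersections $B_{1\dotsb\hat{j}\dotsb n}$) and then define
\[
A_j=\bigsqcup_{i\ne j}B_i.
\]
Under this ansatz, $\mu(A_j)=\sum_{i\ne j}\mu(B_i)$, $\mu(\cup A_j)=\sum_i\mu(B_i)$, and $\cap_j A_j=\emptyset$ automatically, since a point of $B_{i_0}$ is missing from $A_{i_0}$.

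The first step is to solve the linear system $a_j=\sum_{i\ne j}x_i$ for the required measures $x_i=\mu(B_i)$. Summing the defining relation over $j$ gives $\sum_j a_j=(n-1)\sum_i x_i$, hence $\sum_i x_i=\bar a$, and therefore $x_j=\bar a-a_j$. The hypothesis $a_j\le\bar a$ is exactly what is needed to make each $x_j$ non-negative, so this is where the assumption (\ref{condn}) enters.

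The second step is just construction: choose pairwise disjoint measurable subsets $B_1,\dotsc,B_n$ of the real line (say consecutive intervals) with $\mu(B_j)=\bar a-a_j$, and set $A_j=\bigsqcup_{i\ne j}B_i$. Then $\mu(A_j)=\bar a-x_j=a_j$, $\mu(\cup A_j)=\sum_i x_i=\bar a$, and $\cap_j A_j=\emptyset$ as noted above.

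There is no real obstacle here; the mild subtlety is only in seeing that the right unknowns are the sizes of the $(n-1)$-fold pure intersections rather than the $A_i$ themselves. Once one inverts the system to $x_j=\bar a-a_j$, the non-negativity of these numbers is precisely the hypothesis, so everything closes up. Note that this is also why the proposition handles exactly the initial case $k=n$ left over in the induction of Section~\ref{smeasure}: here the emptiness of the $n$-fold intersection is built into the construction, and the measure requirements are met by the explicit formula for the $x_j$.
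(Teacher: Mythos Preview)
Your proof is correct and follows essentially the same approach as the paper: you take pairwise disjoint sets $B_j$ of measure $\bar a-a_j$ (the paper calls them $C_j$) to serve as the pure $(n-1)$-fold intersections, define $A_j=\bigsqcup_{i\ne j}B_i$, and solve the resulting linear system $\sum_{i\ne j}x_i=a_j$ to get $x_j=\bar a-a_j$, whose non-negativity is exactly the hypothesis. The only addition over the paper's version is your explicit check that $\cap_j A_j=\emptyset$ and the concrete choice of intervals, both of which are fine.
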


\begin{proof}
We expect the lower bound to be realised when the only nonempty pure intersections are those of $n-1$ subsets
\[
C_i=B_{1\dotsb (i-1)(i+1)\dotsb n}
=A_1\cap\dotsb\cap A_{i-1}\cap A_{i+1}\cap \dotsb\cap A_n.
\]
The construction is then to find pairwise disjoint $C_i$ and take
\[
A_i=C_1\sqcup\dotsb\sqcup C_{i-1}\sqcup C_{i+1}\sqcup\dotsb\sqcup C_n.
\]

Let $x_i=\mu(C_i)$. Then we can find suitable $C_i$ if and only if the system of linear equations
\[
x_1+\dotsb+x_{i-1}+x_{i+1}+\dotsb+x_n=a_i,\quad i=1,2,\dotsc,n,
\]
has non-negative solution. The system has unique solution $x_i=\bar{a}-a_i$. The condition for the solutions to be non-negative is exactly \eqref{condn}.
\end{proof}

Continuing the proof, by induction, we find $A'_2,A'_3,\dots,A'_n$, such that 
\[
\mu(A'_i)=a'_i,\quad
\mu(\cup A'_i)=\bar{a}-a_1,
\]
and the intersection of any $k$ subsets is empty. Take
\[
A_i=\begin{cases}
\langle a_1 \rangle, &\text{if }i=1, \\
A'_i\sqcup\langle a_1 \rangle, &\text{if }2\le i<k, \\
A'_i, &\text{if }k\le i\le n.
\end{cases}
\]
Then $\mu(A_i)=a_i$, $\mu(\cup A_i)=\mu(\cup A'_i)+a_1=\bar{a}$, and the intersection of any $k$ subsets from $A_i$ is still empty.

Finally, we prove the addendum in Section \ref{lower}. Suppose \eqref{add} is satisfied. We have subsets $A'_1,A'_2,\dots,A'_n$, such that
\[
\mu(A'_i)=a_i,\quad
\mu(\cup A'_i)=\frac{\sigma}{k-1},
\]
and only the pure intersections of $k-1$ subsets are nonempty. We want to increase the size of the union to $a$ while keeping the size of each subset to be still $a_i$. Moreover, we want to accomplish this by ``leaking'' some size from the pure intersections of $k-1$ subsets to pure intersections of $k-2$ subsets. 

Specifically, for any $0\le x\le \mu(B'_{i_1i_2\dotsb i_{k-1}})$, we take 
\begin{align*}
B_{i_1i_2\dotsb i_{k-1}}
&=B'_{i_1i_2\dotsb i_{k-1}}-\langle x\rangle,  \\
B_{i_1\dotsb i_{p-1}i_{p+1}\dotsb i_{k-1}}
&=\langle \frac{x}{k-2}\rangle,
\quad 1\le p\le k-1,
\end{align*}
and keep all other pure intersections the same. For $j\ne i_q$, the pure intersections that form $A_j$ are not changed, so that $A_j=A'_j$ and 
\[
\mu(A_j)=\mu(A'_j)=a_j.
\]
On the other hand, we have $A_{i_q}=(A'_{i_q}-\langle x\rangle)\sqcup(\sqcup_{p\ne q} B_{i_1\dotsb i_{p-1}i_{p+1}\dotsb i_{k-1}})$, so that
\[
\mu(A_{i_q})=\mu(A'_{i_q})-x+(k-2)\frac{x}{k-2}=a_{i_q}.
\]
Moreover, we have $\cup A_i=(\cup A'_i-\langle x\rangle)\sqcup(\sqcup_p B_{i_1\dotsb i_{p-1}i_{p+1}\dotsb i_{k-1}})$, so that 
\[
\mu(\cup A_i)=\mu(\cup A'_i)-x+(k-1)\frac{x}{k-2}=\frac{\sigma}{k-1}+\frac{x}{k-2}.
\]

The leaking of size $x$ described above can be carried out independently for all pure intersections of $k-1$ subsets. Suppose we choose $0\le x_{i_1i_2\dotsb i_{k-1}}\le \mu(B'_{i_1i_2\dotsb i_{k-1}})$ for all pure intersections of $k-1$ subsets and construct
\begin{align*}
B_{i_1i_2\dotsb i_{k-1}}
&=B'_{i_1i_2\dotsb i_{k-1}}-\langle x_{i_1i_2\dotsb i_{k-1}}\rangle, \\
B_{i_1i_2\dotsb i_{k-2}}
&=\langle \frac{1}{k-2}\sum_{j\ne i_1,i_2,\dotsc,i_{k-2}}x_{i_1i_2\dotsb i_{k-2}j}\rangle,
\end{align*}
and keep all the other pure intersections empty. Then we still have $\mu(A_i)=a_i$ and
\[
\mu(\cup A_i)
=\mu(\cup A'_i)+\frac{1}{k-2}\sum x_{i_1i_2\dotsb i_{k-1}}
=\frac{\sigma}{k-1}+\frac{1}{k-2}\sum x_{i_1i_2\dotsb i_{k-1}}.
\]
The sum $\sum x_{i_1i_2\dotsb i_{k-1}}$ can be any non-negative number $\le\sum \mu(B'_{i_1i_2\dotsb i_{k-1}})=\mu(\cup A'_i)=\frac{\sigma}{k-1}$. Therefore by choosing suitable $x_{i_1i_2\dotsb i_{k-1}}$, $\mu(\cup A_i)$ can be any number between $\frac{\sigma}{k-1}$ and 
\[
\frac{\sigma}{k-1}+\frac{1}{k-2}\frac{\sigma}{k-1}=\frac{\sigma}{k-2}.
\]

\section{Realisation for Counting}

In this section, we try to modify the proof of the measure version of the main theorem to the counting version. The proof for the case $\bar{a}\le a_n$ is valid for the counting version if we take $\bar{a}'$ to be the smallest integer $\ge \frac{1}{k-2}(a_1+a_2+\dotsb+a_{n-1})$. For the case $\bar{a}\ge a_n$, we need to realise the smallest integer $\ge \bar{a}$ by subsets of integer sizes. Of course, the ideal case would be that $\bar{a}$ is already an integer, which means that $a_1+a_2+\dotsb+a_n$ is divisible by $k-1$. It tuns out that the general case can be reduced to the ideal case.

Here is the reason for reducing the general case. Without loss of generality, we may assume \eqref{assume} holds. If $a_1=0$, then the realisation is actually for the same $k$ but with smaller $n$. If we keep getting $a_i=0$, the induction will reduce to the initial case $k=n$. If we still have $a_1=0$ in the initial case $k=n$, then by \eqref{assume},
\[
a_n\ge \dfrac{1}{n-1}(a_2+\dotsb+a_n)=\bar{a}.
\]
By the assumption $\bar{a}\ge a_n$, we find $\bar{a}=a_n$ is an integer. 

So we may further assume $a_1>0$ in addition to \eqref{assume}. Suppose $0<r<k-1$ is the remainder of the division of $a_1+a_2+\dotsb+a_n$ by $k-1$. Then the integer part of $\bar{a}$ is
\[
\bar{a}'=\dfrac{1}{k-1}((a_1-1)+\dotsb+(a_r-1)+a_{r+1}+\dotsb+a_n),
\]
and $\bar{a}\ge a_n$ implies $\bar{a}'\ge a_n$. If the ideal cases can be realised, then we have finite sets $A'_1,\dotsc,A'_n$, such that  
\[
\mu(A'_i)=\begin{cases}
a_i-1, &\text{if }1\le i\le r, \\
a_i, &\text{if }r< i\le n,
\end{cases} \qquad
\mu(\cup A'_i)=\bar{a}',
\]
and the intersection of any $k$ sets is empty. Take
\[
A_i=\begin{cases}
A'_i\sqcup\langle 1 \rangle, &\text{if }1\le i\le r, \\
A'_i, &\text{if }r< i\le n.
\end{cases}
\]
Then $\mu(A_i)=a_i$, and $\mu(\cup A_i)=\mu(\cup A'_i)+1=\bar{a}'+1$ is the smallest integer $\ge \bar{a}$. Moreover, the intersection of any $k$ sets from $A_i$ is empty.

Once we reduce the proof of the case $\bar{a}\le a_n$ to the ideal case that $\bar{a}$ is already an integer, the rest of the proof for the measure version remains valid, because all the numbers appearing in the proof are integers. This concludes the proof for the realisation of the lower bound of the number of elements in the union of finite sets.

To show that any number between the lower and upper bounds can be realised, we only need to show that if $a$ and $a+1$ are between the bounds, and $a$ is realised, then $a+1$ is also realised. So assume we have finite sets $A'_i$ satisfying $\mu(A'_i)=a_i$, $\mu(\cup A'_i)=a$, and the intersection of any $k$ sets is empty. Since $\mu(\cup A'_i)<a+1\le a_1+a_2+\dotsb+a_n$, some pure intersection $B'_{i_1i_2\dotsb i_l}\ne\emptyset$ with $l\ge 2$. Fix any $1\le p<l$ and construct
\[
B_{i_1i_2\dotsb i_l}=B'_{i_1i_2\dotsb i_l}-\langle 1\rangle,\quad
B_{i_1i_2\dotsb i_p}=B'_{i_1i_2\dotsb i_p}\sqcup \langle 1\rangle,\quad
B_{i_{p+1}i_{p+2}\dotsb i_l}=B'_{i_{p+1}i_{p+2}\dotsb i_l}\sqcup \langle 1\rangle,
\]
where the three single element sets $\langle 1\rangle$ are distinct. We also keep all the other pure intersections to be the same. Then $\mu(A_i)=\mu(A'_i)$ and $\mu(\cup A_i)=\mu(\cup A'_i)+1=a+1$. Moreover, since we only modify pure intersections of less than $k$ sets, the pure intersections of $k$ sets from $A_i$ are still empty.

Finally, we construct an example showing the addendum does not hold for counting. Consider $a_1=a_2=\dotsb=a_n=1$ and $k=n$. We have $\sigma=n$ and $\dfrac{\sigma}{n-2}>2>\dfrac{\sigma}{n-1}$ whenever $n>4$. If each $A_i$ contains one element and $\cup A_i$ contains two elements, then without loss of generality, we may assume
\[
A_1=\dotsb=A_r=\{x\},\quad
A_{r+1}=\dotsb=A_n=\{y\},\quad x\ne y,\quad 1\le r\le n.
\]
This shows that the only nonempty pure intersections are $B_{1\dotsb r}=\{x\}$ and  $B_{(r+1)\dotsb n}=\{y\}$.

\end{document}